\documentclass{article}

\usepackage{amsmath}
\usepackage{amssymb}
\usepackage{amsthm}
\usepackage{hyperref}
\usepackage{url}
\usepackage{mathbbol}

\newtheorem{theorem}{Theorem}[section]
\newtheorem{lemma}[theorem]{Lemma}

\theoremstyle{definition}

\theoremstyle{remark}
\newtheorem{remark}{Remark}[section]
\newtheorem{conjecture}{Conjecture}

\allowdisplaybreaks

\date{}

\begin{document}

\title{Greedy lattice paths with general weights}

\author{Yinshan Chang\thanks{Address: College of Mathematics, Sichuan University, Chengdu 610065, China; Email: ychang@scu.edu.cn; Supported by National Natural Science Foundation of China \#11701395.}, Anqi Zheng\thanks{Address: College of Mathematics, Sichuan University, Chengdu 610065, China; Email: 253104278@qq.com}}

\date{}

\maketitle

\begin{abstract}
Let $\{X_{v}:v\in\mathbb{Z}^d\}$ be i.i.d. random variables. Let $S(\pi)=\sum_{v\in\pi}X_v$ be the weight of a self-avoiding lattice path $\pi$. Let \[M_n=\max\{S(\pi):\pi\text{ has length }n\text{ and starts from the origin}\}.\]
We are interested in the asymptotics of $M_n$ as $n\to\infty$.

This model is closely related to the first passage percolation when the weights $\{X_v:v\in\mathbb{Z}^d\}$ are non-positive and it is closely related to the last passage percolation when the weights $\{X_v,v\in\mathbb{Z}^d\}$ are non-negative. For general weights, this model could be viewed as an interpolation between first passage models and last passage models. Besides, this model is also closely related to a variant of the position of right-most particles of branching random walks.

Under the two assumptions that $\exists\alpha>0$, $E(X_0^{+})^d(\log^{+}X_0^{+})^{d+\alpha}<+\infty$ and that $E[X_0^{-}]<+\infty$, we prove that there exists a finite real number $M$ such that $M_n/n$ converges to a deterministic constant $M$ in $L^{1}$ as $n$ tends to infinity. And under the stronger assumptions that $\exists\alpha>0$, $E(X_0^{+})^d(\log^{+}X_0^{+})^{d+\alpha}<+\infty$ and that $E[(X_0^{-})^4]<+\infty$, we prove that $M_n/n$ converges to the same constant $M$ almost surely as $n$ tends to infinity.
\end{abstract}

\section{Introduction}
 Let $\{X_{v}:v\in\mathbb{Z}^d\}$ be i.i.d. random variables. We consider self-avoiding paths in a $d$-dimensional lattice defined as follows: A \emph{self-avoiding path} of length $n$ starting from $v$ is a sequence of different vertices $v_1=v,v_2,\ldots,v_n$ such that $v_i$ and $v_{i+1}$ are adjacent on the graph $\mathbb{Z}^d$. For a self-avoiding path $\pi$, its weight is defined by
 \[S(\pi)=\sum_{v\in\pi}X_v.\]
 Define
 \[M_n=\max\left\{S(\pi):\begin{array}{l}\pi\text{ is a self-avoiding path of length }n\\
 \text{starting from the origin}\end{array}\right\}.\]
 If $S(\pi)=M_n$ for a path of length $n$ starting from the origin, we call $\pi$ a \emph{greedy lattice path}.

 In the present paper, we are interested in the asymptotics of $M_n$ as $n\to\infty$. When $\{X_{v}:v\in\mathbb{Z}^d\}$ are i.i.d. \emph{non-negative} random variables, Gandolfi and Kesten have proved in \cite{GandolfiKestenMR1258174} that there exists $M\in[0,+\infty)$ such that
 \[\frac{M_n}{n}\overset{n\to\infty}{\to}M\quad\text{a.s. and in }L^{1}\]
 under the moment condition
 \[\exists \alpha>0,E(X_0^d(\log^{+}X_0)^{d+\alpha})<+\infty.\]
 One motivation for considering general weights instead of non-negative weights is to generalize the results of Gandolfi and Kesten. We show that similar results hold when $X_0$ could possibly take negative values:
 \begin{theorem}\label{thm: linear growth}
  Let $x^{+}=\max(x,0)$, $x^{-}=\max(-x,0)$, $\log^{+}(x)=\max(\log x,0)$. Assume that there exists $\alpha>0$ such that \[E((X_0^{+})^d(\log^{+}X_0^{+})^{d+\alpha})<+\infty\]
  and that $E(X_0^{-})<+\infty$. Then, there exists $M\in(-\infty,+\infty)$ such that
  \[\frac{M_n}{n}\overset{n\to\infty}{\to}M\quad\text{ in }L^{1}.\]
  If we further assume that $E((X_0^{-})^4)<+\infty$, then
  \[\frac{M_n}{n}\overset{n\to\infty}{\to}M\quad\text{ a.s.}.\]
 \end{theorem}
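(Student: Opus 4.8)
The plan is to sandwich $M_n$ between a truncated model, which can be handled by standard concentration and subadditivity arguments, and correction terms that are themselves greedy lattice paths with \emph{non-negative} weights, so that the non-negative theory of Gandolfi--Kesten (and its almost-sure form) applies. Throughout write $\mu=EX_0$, which is finite because the hypothesis on $X_0^{+}$ forces $EX_0^{+}<\infty$, and write $\Gamma_n=\max\{\sum_{v\in\pi}X_v^{+}:\pi\text{ has length }n,\ 0\in\pi\}$. Two a priori bounds hold: the straight path gives $M_n\ge\sum_{i=0}^{n}X_{ie_1}$, so $\liminf_n M_n/n\ge\mu$ almost surely by the strong law of large numbers; and $M_n\le\Gamma_n$, so applying the Gandolfi--Kesten theorem to the i.i.d.\ non-negative weights $X_v^{+}$ (whose moment condition is exactly our hypothesis) gives $\Gamma_n/n\to\gamma_0\in[0,\infty)$ almost surely and in $L^1$, whence $\limsup_n M_n/n\le\gamma_0<\infty$. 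Thus $M_n/n$ is bounded, and only the value of its limit is in question.

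Next I would truncate. For $K\ge1$ set $X_v^{(K)}=(X_v\wedge K)\vee(-K)$ and let $M_n^{(K)}$ be the corresponding maximal weight; these weights are bounded. Then $M_n^{(K)}$ is a convex, $\sqrt{n+1}$-Lipschitz (Euclidean) function of the finitely many independent variables $\{X_v^{(K)}:v\in B_n\}$, where $B_n=\{-n,\dots,n\}^d$, so Talagrand's convex-distance inequality gives $P(|M_n^{(K)}-EM_n^{(K)}|>\varepsilon n)\le C\exp(-c\varepsilon^2 n/K^2)$, which is summable in $n$, so $M_n^{(K)}/n-EM_n^{(K)}/n\to0$ almost surely. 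For the mean, splitting an optimal path of length $n+m$ at its $n$-th vertex $u\in B_n$ gives $M_{n+m}^{(K)}\le M_n^{(K)}+\max_{u\in B_n}M_m^{(K)}(u)+K$, and the sub-Gaussian tail above, together with a union bound over the $\le(2n+1)^d$ translates, yields $E\max_{u\in B_n}M_m^{(K)}(u)\le EM_m^{(K)}+C_K\sqrt{(n+m)\log(n+m)}$; a standard lemma on approximately subadditive sequences then gives $EM_n^{(K)}/n\to M^{(K)}\in\mathbb R$. Hence $M_n^{(K)}/n\to M^{(K)}$ almost surely and in $L^1$.

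Now de-truncate. On the positive side $X_v-X_v^{(K)}\le(X_v-K)^{+}$, so $M_n\le M_n^{(K)}+\Gamma_n^{(K)}$ with $\Gamma_n^{(K)}=\max_\pi\sum_{v\in\pi}(X_v-K)^{+}$; the weights $(X_v-K)^{+}\le X_v^{+}$ satisfy the moment condition, so Gandolfi--Kesten gives $\Gamma_n^{(K)}/n\to\gamma_K$ almost surely and in $L^1$ with $\gamma_K\to0$ as $K\to\infty$ (dominated convergence of the controlling moment). For a matching lower bound, the point is that the ``deep'' sites $\{v:X_v<-K\}$ have density $P(X_0^{-}>K)\to0$, hence are subcritical for site percolation once $K$ is large; they therefore occur in small clusters, and a near-optimal path of length $(1-\varepsilon_K)n$ for the $K$-truncated model can be re-routed to avoid all of them and then padded back up to length $n$, using only sites of value $\ge-K$; since this adds $O(\varepsilon_K n)$ vertices one obtains $M_n\ge M^{(K)}_{(1-\varepsilon_K)n}-2K\varepsilon_K n$, and $K\varepsilon_K=K\,P(X_0^{-}>K)\to0$ because $EX_0^{-}<\infty$. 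Combining the two bounds gives $M^{(K)}-o_K(1)\le\liminf_n M_n/n\le\limsup_n M_n/n\le M^{(K)}+\gamma_K$, which forces $M^{(K)}$ to converge to a finite $M$ (with $\mu\le M\le\gamma_0$) and $M_n/n\to M$ in $L^1$.

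For the almost-sure statement one additionally needs $(M_n-EM_n)/n\to0$ almost surely, with deviations summable over \emph{all} $n$ rather than only along a subsequence, and this is where $E[(X_0^{-})^4]<\infty$ enters. For $K'\ge1$ put $M_n^{[K']}=\max_\pi\sum_{v\in\pi}(X_v\wedge K')$, so that $M_n^{[K']}\le M_n\le M_n^{[K']}+\Gamma_n^{(K')}$; the inputs $X_v\wedge K'$ are bounded above by $K'$ and lie in $L^4$ (their negative part is $X_0^{-}$), and an iterated Efron--Stein / Rosenthal-type estimate gives $E[(M_n^{[K']}-EM_n^{[K']})^4]\le C_{K'}n^2$, hence $\sum_n P(|M_n^{[K']}-EM_n^{[K']}|>\varepsilon n)<\infty$ and $M_n^{[K']}/n\to M^{[K']}$ almost surely; since $\Gamma_n^{(K')}/n\to\gamma_{K'}\to0$ almost surely, this sandwiches $M_n/n$ between $M^{[K']}$ and $M^{[K']}+\gamma_{K'}$, and comparison with the already-identified $L^1$-limit $M$ forces $M_n/n\to M$ almost surely. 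The main obstacle is precisely this negative side: one cannot bound $M_n$ from below by $M_n^{(K)}-\max_\pi\sum_{v\in\pi}(X_v^{-}-K)^{+}$ and quote Gandolfi--Kesten, since the moment hypothesis on $X_0^{-}$ is too weak for that (the latter quantity can in fact be superlinear), so one is forced into the percolation re-routing above and into the fourth-moment fluctuation bound; making the re-routing and the deviation estimates work uniformly in $n$ and over the random optimal path is the technical heart, while the Talagrand concentration, the near-subadditivity, and the positive-side de-truncation are routine by comparison.
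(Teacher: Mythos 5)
Your outline (truncate, prove the truncated law of large numbers, then de-truncate on both sides) is the same general strategy as the paper's, but the two steps you label as the ``technical heart'' are exactly where your argument has genuine gaps, and they are not filled. The first is the lower-bound de-truncation. The inequality $M_n\ge M^{(K)}_{(1-\varepsilon_K)n}-2K\varepsilon_K n$ with the deterministic choice $\varepsilon_K=P(X_0^->K)$ is not justified: the number of deep sites on (or adjacent to) a near-optimal path of the truncated model is a weight-dependent quantity, not a binomial count, so bounding it by $\varepsilon_K n$ for all large $n$ already requires a decorrelation argument of the type the paper proves in its Lemma~2.1 via FKG/Chebyshev's sum inequality; moreover the detours around subcritical clusters have unbounded random lengths, must keep the path self-avoiding and must themselves avoid deep sites and the rest of the path, and no construction is given (and none exists in $d=1$, which the theorem covers). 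A further warning sign: if the re-routing bound held almost surely for all large $n$ as you state it, you would obtain almost sure convergence under $E X_0^-<\infty$ alone, i.e.\ strictly more than the theorem claims and essentially the paper's open conjecture -- the authors explicitly note that the analogous surgery of Dembo--Gandolfi--Kesten for lattice animals does not work for paths. The second gap is the asserted estimate $E[(M_n^{[K']}-EM_n^{[K']})^4]\le C_{K'}n^2$. A direct Efron--Stein/Rosenthal or martingale-difference computation runs over the $\sim n^d$ sites of the box, the single-site increments are unbounded below (only $L^4$), and the event that a site is pivotal is correlated with the value of its weight, so the claimed $n^2$ bound is not ``routine''; controlling precisely this correlation between ``the optimal path visits $v$'' and ``$X_v$ is very negative'' is the content of the paper's key lemma, and you have not supplied a substitute.

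For comparison, the paper sidesteps both difficulties: it truncates only from below, quotes Gandolfi--Kesten for the weights $X_v\vee(-m)$ (so no Talagrand/subadditivity machinery is needed), and, instead of re-routing, it keeps the deep sites on the truncated-optimal path $\pi^{\ge-m}_n$ and bounds the overshoot $\sum_{v\in\pi^{\ge-m}_n}(-m-X_v)\mathbb{1}[X_v\le-m]$. The count $N_n(m)$ of deep sites on that path is compared to a Binomial$(n,P(X_0\le-m))$ through factorial-moment bounds proved by conditioning on the other weights and using that $\mathbb{1}[v\in\pi^{\ge-m}_n]$ is increasing and $\mathbb{1}[X_v\le-m]$ decreasing in $X_v$; this yields the $L^1$ statement under $EX_0^-<\infty$, and, combined with a conditional fourth-moment bound on the overshoots (this is where $E(X_0^-)^4<\infty$ enters) and Borel--Cantelli, the almost sure statement. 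If you want to salvage your route, the pieces you must actually prove are (i) an FKG-type control of the deep sites seen by a weight-dependent optimal path, and (ii) either a genuine re-routing construction with quantitative cost, or a concentration inequality for the unbounded-below truncated model; as written, both are assumed rather than proved.
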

 \begin{remark}
  For $d\geq 2$, when the distribution of $X_0$ is non-degenerate, the limit $M$ in Theorem~\ref{thm: linear growth} is strictly greater than $EX_0$. To show this, one could use the idea of the proof of Theorem~7.4 in \cite{SmytheWiermanMR513421}.
 \end{remark}

 Next, we wish to discuss the other two motivations for considering the generalization from positive weights to general weights. The second motivation for considering general weights is to unify first passage models and last passage models. Indeed, when the weights $X_v$ are non-positive, our model is closely related to first passage percolations. We refer to \cite{AuffingerDamronHansonMR3729447} for an overview of first passage percolation models. View $-X_v$ as the time for passing through the vertex $v$. Let $C(t)$ be the cluster of vertices that could be reached within time $t$ from the origin. Then, if $-M_n>t$, then each self-avoiding path of length $n$ has the passage time greater than $t$. Hence, the distance from the origin to the boundary of $C(t)$ is less than $n$. Or equivalently, the inner radius of $C(t)$ is less than $n$. Similarly, when the weights $X_v$ are non-negative, the behavior of $M_n$ is closely related to last passage percolations. So, our model with general weights generalizes both the first passage models and last passage models. It serves as an interpolation between these two kinds of models. Thus, by looking at the model with general weights, there may exist a chance for discovering the connection between first passage models and last passage models and for better understanding these two kinds of models.

 The third motivation for studying the case of general weights is related to branching random walks. We refer to \cite{ShiMR3444654} for a detailed study of branching random walks. Consider one dimensional branching random walks with deterministic binary branching mechanism. One can interpret $X_v$ as one-step displacement of a particle in branching random walks. Then, the position of the right-most particle of the $n$-th generation is precisely $M_{n+1}-X_0$ on the binary tree, where the origin of $\mathbb{Z}^d$ is replaced by the root of the binary tree. If the branching process is a super-critical Galton-Watson process, then the position of the right-most particle of the $n$-th generation is exactly $M_{n+1}-X_0$ on the Galton-Watson tree. Again, compared with the model on $\mathbb{Z}^d$, the starting point of self-avoiding paths is the root of the Galton-Watson tree instead of the origin of $\mathbb{Z}^d$. In this manner, the position of the right-most particle of a one dimensional branching random walk is closely related with the weight of the greedy path on a random tree. One may make a natural extension and consider similar problems on other graphs instead of trees. We consider the problem on $\mathbb{Z}^d$. The correlation is stronger and the problem is more difficult due to the absence of the tree structure in our opinion. For $M_n$ on the binary tree, it is known that $M_n$ grows asymptotically linearly in $n$ under natural assumptions on the weights, see \cite[Theorem~1.3]{ShiMR3444654} for the speed of left-most particle in general branching random walks. (By considering the mirror symmetry about the origin,  the left-most particle changes to the right-most particle.) It is interesting to see that $M_n$ also grows linearly in $n$ on the graph $\mathbb{Z}^d$ according to Theorem~\ref{thm: linear growth}.

 Next, we briefly present the proof strategy. The general idea of the proof is to truncate the variables $\{X_{v}:v\in\mathbb{Z}^d\}$ and consider the weight $M_n^{\geq -m}$ of the greedy lattice paths associated with the weights $\{X_v\vee(-m):v\in\mathbb{Z}^d\}$. By Theorem~1 in \cite{GandolfiKestenMR1258174}, $M_n^{\geq -m}/n$ converges towards a finite real number $M^{\geq -m}\geq -m$ almost surely and in $L^{1}$ as $n\to\infty$. Finally, we show that $M_n^{\geq -m}$ is a good approximation of $M_n$ and $M_n/n$ converges to $M=\inf_{m}M^{\geq -m}$. This general idea is similar to the study of greedy lattice animals with negative weights in \cite{DemboGandolfiKestenMR1825148}. We give the details of the proof in Section~\ref{sect: proof of main theorem}.

 Our results are not optimal. We present conjectures on the possibly optimal result in Section~\ref{sect: open problems}.

\section{Proof of the linear growth}\label{sect: proof of main theorem}

For an event $A$, let $\mathbb{1}[A]$ be the indicator function of $A$. Fix an order $\preceq$ of $\mathbb{Z}^d$. It induces a lexicographic order on the space of lattice paths of length $n$. For $m>0$, let $\pi^{\geq -m}_{n}$ be the greedy lattice path of length $n$ starting from the origin associated with the weights $\{X_v\vee(-m):v\in\mathbb{Z}^d\}$ such that $\pi^{\geq -m}_{n}$ is the smallest in lexicographic order among all such greedy lattice paths. Recall that $M^{\geq -m}_{n}$ is the weight of $\pi^{\geq -m}_{n}$, and that $M_n$ is the weight of a greedy lattice path of length $n$ starting from the origin associated with the weights $\{X_v:v\in\mathbb{Z}^d\}$. By \cite[Theorem~1]{GandolfiKestenMR1258174}, since there exists $\alpha>0$ such that \[E((X_0+m)\vee 0)^d(\log^{+}((X_0+m)\vee 0))^{d+\alpha})<+\infty,\]
there exists $M^{\geq -m}\in(-\infty,+\infty)$ such that
\begin{equation}\label{eq: truncated cvg}
\frac{M^{\geq -m}_n}{n}\overset{n\to\infty}{\to}M^{\geq -m}\quad\text{a.s. and in }L^{1}.
\end{equation}
Define
\begin{equation}\label{eq: defn M}
 M=\lim_{m\to\infty}M^{\geq -m}.
\end{equation}
Note that $M^{\geq -m}\geq E(X_0\vee (-m))$. Hence, $M\in[E(X_0),+\infty)$. Note that $M_n\leq M^{\geq -m}_{n}$ for any $m>0$. On the other hand, for $m>0$, we have that
\[M_n\geq S(\pi^{\geq -m}_{n})=\sum_{v\in\pi^{\geq -m}_{n}}X_v=M_n^{\geq -m}-\sum_{v\in\pi^{\geq -m}_{n}}(-m-X_v)\mathbb{1}[X_v\leq -m].\]
Hence, we have that
\begin{align}\label{eq: upper bound of |M_n-M|}
 |M_n/n-M|\leq & |M^{\geq -m}_{n}/n-M^{\geq -m}|+|M^{\geq -m}-M|\notag\\
 &+\sum_{v\in\pi^{\geq -m}_{n}}(-m-X_v)\mathbb{1}[X_v\leq -m]/n.
\end{align}
Firstly, we prove the $L^1$ convergence: Given the greedy lattice path $\pi^{\geq -m}_{n}$, the conditional expectation of $\sum_{v\in\pi^{\geq -m}_{n}}(-m-X_v)\mathbb{1}_{X_v\leq -m}$ is equal to \[E(-m-X_0|X_0\leq -m)N_n(m),\]
where $N_n(m)$ is the number of sites on $\pi^{\geq -m}_{n}$ with weight $-m$, i.e.
 \[N_n(m)=\sum_{v\in\pi^{\geq -m}_{n}}\mathbb{1}[X_v\leq -m].\]
Then, we have that
\begin{equation}\label{eq: upper bound of expectation of increament along pimn}
E\left(\sum_{v\in\pi^{\geq -m}_{n}}(-m-X_v)\mathbb{1}[X_v\leq -m]\right)= E(N_n(m))E(-m-X_0|X_0\leq -m).
\end{equation}

We will need the following lemma.
\begin{lemma}\label{lem: key lemma}
 For $k\geq 1$, we have that
 \[E\prod_{j=0}^{k-1}(N_n(m)-j)\leq \prod_{j=0}^{k-1}(n-j)P(X_0\leq -m)^k.\]
\end{lemma}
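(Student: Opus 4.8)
The plan is to expand the falling factorial combinatorially and then estimate each resulting term by a resampling argument. Write $Y_v=X_v\vee(-m)$, so that $\pi^{\geq -m}_n$ is the lexicographically smallest maximizer of $\rho\mapsto S_Y(\rho):=\sum_{v\in\rho}Y_v$ over self-avoiding paths $\rho$ of length $n$ from the origin, note that $\{X_v\le-m\}=\{Y_v=-m\}$, and set $p:=P(X_0\le-m)$. Observing that $\prod_{j=0}^{k-1}(N_n(m)-j)$ is exactly the number of \emph{ordered} $k$-tuples $(v_1,\dots,v_k)$ of distinct vertices of $\pi^{\geq -m}_n$ all satisfying $Y_{v_i}=-m$ (with the usual convention that this product vanishes when $N_n(m)<k$), I would decompose over the countably many length-$n$ self-avoiding paths $\pi$ from the origin and over such tuples, and use Tonelli to write
\[
E\prod_{j=0}^{k-1}(N_n(m)-j)=\sum_{\pi}\ \sum_{\substack{(v_1,\dots,v_k)\in\pi^k\\ \text{distinct}}}P\bigl(\pi^{\geq -m}_n=\pi,\ X_{v_i}\le-m\ \forall i\bigr).
\]
Since each length-$n$ path $\pi$ has $n$ vertices, hence carries $\prod_{j=0}^{k-1}(n-j)$ ordered $k$-tuples of distinct vertices, and $\sum_{\pi}P(\pi^{\geq -m}_n=\pi)=1$, the lemma reduces to the pointwise estimate
\[
P\bigl(\pi^{\geq -m}_n=\pi,\ X_{v_i}\le-m\ \forall i\bigr)\ \le\ p^{k}\,P\bigl(\pi^{\geq -m}_n=\pi\bigr)
\]
for every fixed $\pi$ and every fixed ordered tuple of distinct vertices $v_1,\dots,v_k$ of $\pi$.

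To prove this estimate I would resample the weights at $v_1,\dots,v_k$. Enlarge the probability space to carry i.i.d.\ copies $X'_{v_1},\dots,X'_{v_k}$ of $X_0$ independent of the field $\{X_v:v\in\mathbb{Z}^d\}$, and let $\bar Y$ be the field with $\bar Y_w=Y_w$ for $w\notin\{v_1,\dots,v_k\}$ and $\bar Y_{v_i}=X'_{v_i}\vee(-m)$. The crux is the claim that on the event $A:=\{\pi^{\geq -m}_n=\pi\}\cap\{X_{v_i}\le-m\ \forall i\}$ the lexicographically smallest greedy path for $\bar Y$ is again $\pi$. Granting the claim, $A$ is contained in $\{\pi^{\geq -m}_n(\bar Y)=\pi\}\cap\{X_{v_i}\le-m\ \forall i\}$; since $\bar Y$ depends only on $\{X_w:w\notin\{v_i\}\}$ and $X'_{v_1},\dots,X'_{v_k}$ it is independent of $(X_{v_1},\dots,X_{v_k})$, and $\bar Y$ has the same law as $\{X_w\vee(-m):w\in\mathbb{Z}^d\}$; hence
\[
P(A)\le P\bigl(\pi^{\geq -m}_n(\bar Y)=\pi\bigr)\prod_{i=1}^{k}P(X_{v_i}\le-m)=p^{k}\,P\bigl(\pi^{\geq -m}_n=\pi\bigr),
\]
which is the desired bound.

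The step I expect to be the main obstacle is the claim itself, a monotonicity fact asserting that raising the weights along the current greedy path cannot change it. On $A$ one has $Y_{v_i}=-m$, so $\bar Y\ge Y$ everywhere, with $\bar Y>Y$ possible only at the $v_i$, all of which lie on $\pi$; consequently, for every length-$n$ self-avoiding path $\rho$ from the origin,
\[
S_{\bar Y}(\rho)=S_Y(\rho)+\sum_{i:\,v_i\in\rho}(\bar Y_{v_i}+m)\le S_Y(\pi)+\sum_{i=1}^{k}(\bar Y_{v_i}+m)=S_{\bar Y}(\pi),
\]
using $S_Y(\rho)\le S_Y(\pi)$ (as $\pi$ maximizes $S_Y$ on $A$) and $\bar Y_{v_i}+m\ge0$; thus $\pi$ is a greedy path for $\bar Y$. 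For the lexicographic tie-breaking, if some $\rho\prec\pi$ achieved $S_{\bar Y}(\rho)=S_{\bar Y}(\pi)$, the displayed chain of inequalities would be an equality, forcing $S_Y(\rho)=S_Y(\pi)$, so that $\rho$ would be a greedy path for $Y$ strictly smaller than $\pi$ in lexicographic order, contradicting $\pi^{\geq -m}_n=\pi$ on $A$. Hence $\pi$ is the lexicographically smallest greedy path for $\bar Y$, which proves the claim and hence the lemma. The remaining verifications — measurability of $\pi^{\geq -m}_n$ as a function of the field, the bookkeeping in the opening identity, and the conventions for $k>n$ — are routine.
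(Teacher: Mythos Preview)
Your proof is correct, and it takes a genuinely different route from the paper's. The paper does not decompose over the path $\pi$; instead it sums directly over ordered $k$-tuples $(v_1,\dots,v_k)$ of distinct vertices, conditions on the $\sigma$-field $\mathcal{F}_{v_1,\dots,v_k}$ generated by the remaining weights, and observes that (conditionally) $\mathbb{1}[v_1,\dots,v_k\in\pi^{\geq -m}_n]$ is nondecreasing and $\mathbb{1}[X_{v_i}\le -m\ \forall i]$ is nonincreasing in $(X_{v_1},\dots,X_{v_k})$; Chebyshev's sum inequality (for $k=1$) and the FKG inequality (for $k\ge2$) then give $P(\{v_i\in\pi^{\geq -m}_n\}\cap\{X_{v_i}\le -m\}\ \forall i)\le p^{k}\,P(v_i\in\pi^{\geq -m}_n\ \forall i)$, and summing over tuples yields $\prod_{j=0}^{k-1}(n-j)\,p^{k}$. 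Your resampling argument replaces the abstract correlation inequality by an explicit coupling: you fix the path, swap in fresh copies at the tuple, and verify by hand the monotonicity (including the lexicographic tie-break) that the paper packages into ``$\mathbb{1}[v_i\in\pi^{\geq -m}_n]$ is increasing''. The payoff of your approach is that it is self-contained (no FKG citation needed) and yields the sharper path-level bound $P(\pi^{\geq -m}_n=\pi,\ X_{v_i}\le -m\ \forall i)\le p^{k}\,P(\pi^{\geq -m}_n=\pi)$; the paper's approach is shorter to state and avoids tracking the identity of the greedy path. Both rest on the same underlying monotonicity fact.
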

\begin{remark}
 It suffices to prove Lemma~\ref{lem: key lemma} for $n\geq k$. For $n\leq k-1$, both sides equal to $0$ and the inequality trivially holds.
\end{remark}
We postpone the proof of Lemma~\ref{lem: key lemma} and proceed with the proof of Theorem~\ref{thm: linear growth}. By \eqref{eq: upper bound of |M_n-M|}, \eqref{eq: upper bound of expectation of increament along pimn} and Lemma~\ref{lem: key lemma} with $k=1$, we have that
\begin{align*}
E|M_n/n-M|\leq & E|M^{\geq -m}_{n}/n-M^{\geq -m}|+|M^{\geq -m}-M|\notag\\
&+E\left((-m-X_0)\mathbb{1}[X_0\leq -m]\right).
\end{align*}
By \eqref{eq: truncated cvg}, for $m>0$, we have that
\[\limsup_{n\to\infty}E|M_n/n-M|\leq |M^{\geq -m}-M|+E\left((-m-X_0)\mathbb{1}[X_0\leq -m]\right).\]
Note that
\[\lim_{m\to\infty}|M^{\geq -m}-M|=0\]
and that
\[\lim_{m\to\infty}E\left((-m-X_0)\mathbb{1}[X_0\leq -m]\right)=0.\]
Hence, $\lim_{n\to\infty}E|M_n-M|=0$, i.e. $M_n\overset{n\to\infty}{\to}M$ in $L^{1}$.

Next, we prove the almost sure convergence: Denote by $Y$ a binomial random variable with parameter $(n,P(X_0\leq -m))$. Then, we have that
\[E\prod_{j=0}^{k-1}(Y-j)=\prod_{j=0}^{k-1}(n-j)\times P(X_0\leq -m)^k.\]
Hence, by Lemma~\ref{lem: key lemma}, we see that
\[E\prod_{j=0}^{k-1}(N_n(m)-j)\leq E\prod_{j=0}^{k-1}(Y-j).\]
By \cite[Eq. (4.1.3)]{RomanMR741185}, we have that
\[x^n=\sum_{k=0}^{n}S(n,k)x(x-1)\cdots (x-k+1),\]
where $S(n,k)\geq 0$ is the Stirling number of the second type. Hence, we have that
\[E(N_n(m)^k)\leq E(Y^k).\]
Therefore, for $t\geq 0$, we have that
\begin{multline*}
E\exp(tN_n(m))=\sum_{k=0}^{\infty}\frac{t^k}{k!}E(N_n(m)^k)\leq \sum_{k=0}^{\infty}\frac{t^k}{k!}E(Y^k)\\
=E\exp(tY)=((e^t-1)P(X_0\leq -m)+1)^n.
\end{multline*}
By Markov's inequality, we obtain that
\begin{multline*}
P(N_n(m)\geq 2P(X_0\leq -m)n)\\
\leq \exp(-2tP(X_0\leq -m)n)((e^t-1)P(X_0\leq -m)+1)^n.
\end{multline*}
Take $t=\ln\left(\frac{2(1-P(X_0\leq -m))}{1-2P(X_0\leq -m)}\right)$. Then, for sufficiently large $m$, we get that
\begin{equation}\label{eq: exponential upper bound for N_n(m)}
P(N_n(m)\geq 2P(X_0\leq -m)n)\leq \exp(-c(m)n),
\end{equation}
where $c(m)=2P(X_0\leq -m)\ln\left(\frac{2(1-P(X_0\leq -m))}{1-2P(X_0\leq -m)}\right)-\ln\left(\frac{1-P(X_0\leq -m)}{1-2P(X_0\leq -m)}\right)>0$. Conditionally on $N_n(m)=\ell$, $\sum_{v\in\pi^{\geq -m}_{n}}(-m-X_v)\mathbb{1}[X_v\leq -m]$ has the same distribution as $\xi_1+\xi_2+\cdots+\xi_{\ell}$, where $\xi_1,\xi_2,\ldots,\xi_{\ell}$ are independent with the same distribution $G(\mathrm{d}x)=P(-m-X_0\in\mathrm{d}x|X_0\leq -m)$. Also, we have that $E\xi_1=E\left((-m-X_0)\mathbb{1}[X_0\leq -m]\right)/P(X_0\leq -m)$. Note that
\begin{align}\label{eq: fourth moment}
E(\sum_{j=1}^{\ell}(\xi_j-E\xi_j))^4&=\ell E(\xi_1-E\xi_1)^4+6\ell(\ell-1)(E(\xi_1-E\xi_1)^2)^2\notag\\
&\leq 8\ell^2E(\xi_1-E\xi_1)^4\notag\\
&\leq 8\ell^2E\xi_1^4,
\end{align}
where the last inequality holds since $\xi_1\geq 0$. Hence, for any $\ell\leq 2P(X_0\leq -m)n$ and any $\varepsilon\geq 4E\left((-X_0-m)\mathbb{1}[X_0\leq -m]\right)$, we have that
\begin{align}\label{eq: Borel proba upper bound}
P(\sum_{j=1}^{\ell}\xi_j\geq\varepsilon n) &= P(\sum_{j=1}^{\ell}\xi_j-E\xi_j\geq\varepsilon n-\ell E\xi_1)\notag\\
& \leq P(\sum_{j=1}^{\ell}\xi_j-E\xi_j\geq \frac{\varepsilon}{2}n)\notag\\
& \overset{\text{Markov's inequality}}{\leq} \frac{16}{\varepsilon^4n^4}E(\sum_{j=1}^{\ell}\xi_j-E\xi_j)^4\notag\\
& \overset{\text{by \eqref{eq: fourth moment}}}{\leq }\frac{128\ell^2}{\varepsilon^4n^4}E\xi_1^4\notag\\
& \leq a(m,\varepsilon)/n^2.
\end{align}
Hence, for $\varepsilon=4E(-X_0-m)\mathbb{1}[X_0\leq -m]$ and $\ell=\lfloor 2P(X_0\leq -m)n\rfloor$, since $\xi_j\geq 0$, we have that
\begin{multline*}
P\left(\sum_{v\in\pi^{\geq -m}_{n}}(-m-X_v)\mathbb{1}[X_v\leq -m]/n\geq \varepsilon\right) \\
\leq P(N_n(m)\geq 2P(X_0\leq -m)n)+P(\sum_{j=1}^{\ell}\xi_j\geq\varepsilon n)\\
\overset{\eqref{eq: exponential upper bound for N_n(m)},\eqref{eq: Borel proba upper bound}}{\leq} \exp(-c(m)n)+a(m,\varepsilon)/n^2.
\end{multline*}
Since $\sum_{n\geq 1}\exp(-c(m)n)+a(m,\varepsilon)/n^2<+\infty$, by Borel-Cantelli lemma, the following inequality holds with probability one:
\[\limsup_{n\to\infty}\sum_{v\in\pi^{\geq -m}_{n}}(-m-X_v)\mathbb{1}[X_v\leq -m]/n\leq \varepsilon=4E(-X_0-m)\mathbb{1}[X_0\leq -m].\]
Together with the fact that $\lim_{n\to\infty}M^{\geq -m}_{n}/n=M^{\geq -m}$ a.s., by \eqref{eq: upper bound of |M_n-M|}, for any $m>0$, we have that
\begin{equation*}
\limsup_{n\to\infty}|M_n/n-M|\leq |M^{\geq -m}-M|+4E(-X_0-m)\mathbb{1}[X_0\leq -m]\overset{m\to\infty}{\to}0.
\end{equation*}
Consequently, $M_n/n\overset{n\to\infty}{\to}M$ almost surely.

Finally, we give the proof of Lemma~\ref{lem: key lemma}.
\begin{proof}[Proof of Lemma~\ref{lem: key lemma}]
 We only present the detailed proofs for $k=1$ and $k=2$. The general case could be proved similarly. For $k=1$, we have that
\begin{align*}
 E(N_n(m))&=\sum_{v\in\mathbb{Z}^d}P(v\in\pi^{\geq -m}_{n},X_{v}\leq -m)\\
 &=\sum_{v\in\mathbb{Z}^d}E(E(\mathbb{1}[v\in\pi^{\geq -m}_{n}]\mathbb{1}[X_v\leq -m]|\mathcal{F}_v)),
\end{align*}
where $\mathcal{F}_v$ is the sigma-field generated by $\{X_u:u\in\mathbb{Z}^d,u\neq v\}$. For fixed values of $\{X_u:u\in\mathbb{Z}^d,u\neq v\}$, $\mathbb{1}[v\in\pi^{\geq -m}_{n}]$ is an increasing function of $X_v$, and $\mathbb{1}[X_v\leq -m]$ is a decreasing function of $X_v$. For a increasing function $f$, a decreasing function $g$ and a probability measure $\mu$, we have that
\[\int f(x)g(x)\,\mu(\mathrm{d}x)\leq\int f(x)\,\mu(\mathrm{d}x)\int g(x)\,\mu(\mathrm{d}x).\]
This inequality is the continuous version of Chebyshev's sum inequality, see e.g. \cite[Section~7.1]{PJPTMR1162312}. By using this inequality, we obtain that
\[E(\mathbb{1}[v\in\pi^{\geq -m}_{n}]\mathbb{1}[X_v\leq -m]|\mathcal{F}_v)\leq E(\mathbb{1}[v\in\pi^{\geq -m}_{n}]|\mathcal{F}_v)E(\mathbb{1}[X_v\leq -m]|\mathcal{F}_v).\]
By independence between $X_v$ and $\mathcal{F}_v$, we have that
\[E(\mathbb{1}[X_v\leq -m]|\mathcal{F}_v)=P(X_v\leq -m)=P(X_0\leq -m).\]
Finally, we have that
\begin{align*}
 E(N_n(m))&\leq \sum_{v\in\mathbb{Z}^d}P(X_0\leq -m)E(E(\mathbb{1}[v\in\pi^{\geq -m}_{n}]|\mathcal{F}_v))\\
 &=\sum_{v\in\mathbb{Z}^d}P(X_0\leq -m)E(\mathbb{1}[v\in\pi^{\geq -m}_{n}])\\
 &=P(X_0\leq -m)E(\sum_{v\in\mathbb{Z}^d}\mathbb{1}[v\in\pi^{\geq -m}_{n}])\\
 &=nP(X_0\leq -m).
\end{align*}
For $k=2$, we have that
\begin{align*}
 EN_n(m)&(N_n(m)-1)\\
 &=\sum_{v,w\in\mathbb{Z}^d:v\neq w}P(v\in\pi^{\geq -m}_{n},X_{v}\leq -m,w\in\pi^{\geq -m}_{n},X_{w}\leq -m)\\
 &=\sum_{v\in\mathbb{Z}^d}E(E(\mathbb{1}[v\in\pi^{\geq -m}_{n},w\in\pi^{\geq -m}_{n}]\mathbb{1}[X_v\leq -m,X_w\leq -m]|\mathcal{F}_{v,w})),
\end{align*}
where $\mathcal{F}_{v,w}$ is the sigma-field generated by $\{X_u:u\in\mathbb{Z}^d,u\neq v,u\neq w\}$. By independence of $\mathcal{F}_{v,w}$ and $(X_v,X_w)$, given $\mathcal{F}_{v,w}$, the conditional distribution of $(X_v,X_w)$ is a product measure. Hence, FKG inequalities hold. (See \cite{FortuinKasteleynGinibreMR309498} for FKG inequalities for a distributive lattice.) Note that $\mathbb{1}[v\in\pi^{\geq -m}_{n},w\in\pi^{\geq -m}_{n}]$ is a non-decreasing function of $(X_v,X_w)$ and $\mathbb{1}[X_v\leq -m,X_w\leq -m]$ is a non-increasing function of $(X_v,X_w)$. Therefore, by FKG inequalities, we have that
\begin{multline*}
E(\mathbb{1}[v\in\pi^{\geq -m}_{n},w\in\pi^{\geq -m}_{n}]\mathbb{1}[X_v\leq -m,X_w\leq -m]|\mathcal{F}_{v,w})\\
\leq E(\mathbb{1}[v\in\pi^{\geq -m}_{n},w\in\pi^{\geq -m}_{n}]|\mathcal{F}_{v,w})E(\mathbb{1}[X_v\leq -m,X_w\leq -m]|\mathcal{F}_{v,w}).
\end{multline*}
Note that $E(\mathbb{1}[X_v\leq -m,X_w\leq -m]|\mathcal{F}_{v,w})=P(X_0\leq -m)^2$. Hence, we have that
\begin{align*}
 EN_n(m)&(N_n(m)-1)\\
 &\leq \sum_{v,w\in\mathbb{Z}^d:v\neq w}P(X_0\leq -m)^2E(E(\mathbb{1}[v\in\pi^{\geq -m}_{n},w\in\pi^{\geq -m}_{n}]|\mathcal{F}_{v,w}))\\
 &=\sum_{v,w\in\mathbb{Z}^d:v\neq w}P(X_0\leq -m)^2E(\mathbb{1}[v\in\pi^{\geq -m}_{n},w\in\pi^{\geq -m}_{n}])\\
 &=P(X_0\leq -m)^2E\left(\sum_{v,w\in\mathbb{Z}^d:v\neq w}\mathbb{1}[v\in\pi^{\geq -m}_{n},w\in\pi^{\geq -m}_{n}]\right)\\
 &= n(n-1)P(X_0\leq -m)^2.
\end{align*}
For general $k\geq 1$, we have that
\[E\prod_{j=0}^{k-1}(N_n(m)-j)=\sum_{\text{different }v_1,\ldots,v_k\in\mathbb{Z}^d}P(\cap_{j=1}^{k}\{v_j\in\pi^{\geq -m}_{n},X_{v_{j}}\leq -m\}).\]
The rest of the proof is similar to the case $k=2$.
\end{proof}

\section{Open problems}\label{sect: open problems}

We believe that the condition $E((X_0^{-})^4)<+\infty$ is not necessary for the almost sure convergence in Theorem~\ref{thm: linear growth}. It appears for purely technical reasons. Indeed, we need to prove that $\sum_{v\in\pi^{\geq -m}_{n}}(-m-X_v)\mathbb{1}[X_v\leq -m]/n$ is small for sufficiently large $m$ and $n$. However, if the weaker condition $E(X_0^{-})<+\infty$ is violated, by \eqref{eq: upper bound of expectation of increament along pimn}, the expectation of $\sum_{v\in\pi^{\geq -m}_{n}}(-m-X_v)\mathbb{1}[X_v\leq -m]/n$ is infinite, which is an obstruction.

We wish to remove this condition and to prove the almost sure convergence for weights with arbitrary negative tails in the future work, i.e. to prove the following conjecture.

\begin{conjecture}\label{conj: one}
Let $\{X_{v}:v\in\mathbb{Z}^d\}$ be i.i.d. random variables. Assume that there exists $\alpha>0$ such that
\[E((X_0^{+})^d(\log^{+}X_0^{+})^{d+\alpha})<+\infty.\]
Then, there exists a constant $M\in(-\infty,+\infty)$ such that $M_n/n\overset{n\to\infty}{\to}M$ almost surely.
\end{conjecture}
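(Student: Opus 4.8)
\emph{A plan.} The scaffolding of the argument above is still in force: for every $m$ one has $M^{\geq -m}_{n}/n\to M^{\geq -m}$ almost surely by \cite[Theorem~1]{GandolfiKestenMR1258174} (which uses only the positive-moment hypothesis), the limit $M:=\lim_{m\to\infty}M^{\geq -m}=\inf_m M^{\geq -m}$ exists in $[-\infty,+\infty)$, and $M_n\leq M^{\geq -m}_{n}$, so that $\limsup_n M_n/n\leq M$. To prove Conjecture~\ref{conj: one} what remains is the matching bound $\liminf_n M_n/n\geq M$ together with the assertion $M>-\infty$. (For $d=1$ the statement is false once $E(X_0^{-})=+\infty$: the only self-avoiding paths from the origin run straight along a half-axis and their averages tend to $EX_0=-\infty$, so the conjecture should be read with $d\geq2$.) The obstruction flagged by the authors is that the bound $M_n\geq M^{\geq -m}_{n}-\Delta_n(m)$, with $\Delta_n(m):=\sum_{v\in\pi^{\geq -m}_{n}}(-m-X_v)\mathbb{1}[X_v\leq -m]$, is too lossy when the negative tail is heavy: since $\pi^{\geq -m}_{n}$ is selected treating every bad site ($X_v\leq -m$) as the moderate weight $-m$, it may wander through a site whose true weight is catastrophically negative, and $\Delta_n(m)$ — a random sum of $N_n(m)$ i.i.d.\ terms drawn from $P(-m-X_0\in\cdot\mid X_0\leq -m)$, each of infinite mean when $E(X_0^{-})=+\infty$ — neither has finite mean nor is clearly $o(n)$. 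A proof must therefore bound $M_n$ from below by the value of a path constructed \emph{knowing} which sites are bad and routing around them, while still collecting the positive weights about as efficiently as $\pi^{\geq -m}_{n}$ does.

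The plan is a renormalization in the spirit of Gandolfi--Kesten \cite{GandolfiKestenMR1258174} and Dembo--Gandolfi--Kesten \cite{DemboGandolfiKestenMR1825148}, carried out over the site percolation of the bad sites. Fix $m$ large enough that $p_m:=P(X_0\leq -m)$ lies below the site-percolation threshold of $\mathbb{Z}^d$ (possible since $p_m\to0$), so the non-bad sites percolate. Tessellate $\mathbb{Z}^d$ into boxes of side $L$ and call a box \emph{good} when (i) the Gandolfi--Kesten goodness conditions on the positive weights $X_v^{+}$ hold on a fixed neighbourhood of it, (ii) the non-bad sites inside a slight enlargement of the box join every pair of opposite faces by a self-avoiding path of at most $C_dL$ sites, and (iii) the box carries a self-avoiding crossing \emph{using only non-bad sites} whose weight is within $\varepsilon L$ of the weight the Gandolfi--Kesten construction would extract from it. Each of these events has probability tending to $1$ as $L\to\infty$, and a box's goodness depends only on the weights in a bounded neighbourhood, so by the Liggett--Schonmann--Stacey domination the good boxes contain a supercritical cluster on the coarse lattice into which the origin connects after $O(1)$ coarse steps. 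Gluing the non-bad crossings of (iii) along a coarse path through good boxes, with an initial $o(n)$ segment leaving the almost surely finite bad cluster of the origin and arranging the total length to be exactly $n$, would produce, almost surely, a length-$n$ self-avoiding path of weight at least $(M^{\geq -m}-\varepsilon)n-o(n)$; since $M^{\geq -m}\geq M$ while $M_n\leq M$, letting the block scale grow and then $\varepsilon\to0$ pins $\liminf_n M_n/n\geq M^{\geq -m}=M$, and the same construction yields $M>-\infty$.

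The main obstacle is property (iii): controlling the aggregate cost of staying on the non-bad sublattice. In a good box the bad sites are sparse, so the Gandolfi--Kesten crossing meets only $O(p_mL)$ of them; replacing each by a local bypass through non-bad sites removes its true contribution $X_v$ (a gain of at least $m$) at the cost of $O(1)$ non-bad sites, whose weights conditioned on $X_u>-m$ are, with overwhelming probability, of order $1$ rather than of order $m$, so a \emph{typical} bypass is actually favourable. The difficulties are the small but non-negligible fraction of bad sites whose local bypasses all happen to run through moderately negative (yet non-bad) sites, and — more seriously — that the Gandolfi--Kesten crossing is itself steered toward the largest local positive weights, which are correlated with the local geometry, so one must argue that after forbidding the bad sites one can still reach essentially those same positive weights and that the second-order losses along all the bypasses sum to $o(n)$ uniformly. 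Making the percolation of the non-bad sites, the Gandolfi--Kesten positive-weight estimates, and this rerouting analysis coexist on one renormalised scale is where the real work lies; a further subtlety is that $m$ must stay moderate (to remain subcritical) yet be large enough that $M^{\geq -m}$ is close to $M$, and it is conceivable that closing the last gap requires a finer, multi-scale version of the truncation scheme rather than a single level $m$. Everything else is bookkeeping of the kind already present in \cite{GandolfiKestenMR1258174,DemboGandolfiKestenMR1825148}.
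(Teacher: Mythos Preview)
The statement you are addressing is \emph{Conjecture~\ref{conj: one}}, which the paper explicitly presents as an open problem in Section~\ref{sect: open problems}; the paper offers heuristics for why it should hold but no proof. So there is no ``paper's own proof'' to compare against, and your proposal cannot be judged as matching or differing from one.

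As for the proposal itself: it is a strategic outline, not a proof, and you are candid about this. The upper bound $\limsup_n M_n/n\leq M$ is indeed immediate from $M_n\leq M^{\geq -m}_n$ and \eqref{eq: truncated cvg}, and you correctly identify the lower bound as the entire difficulty. Your observation that the conjecture fails for $d=1$ when $E(X_0^{-})=+\infty$ is valid and is a point the paper does not make explicit. The renormalization scheme you sketch---percolation of non-bad sites, block-level goodness combining Gandolfi--Kesten control of positive weights with short non-bad crossings, and gluing---is a natural line of attack and is in the spirit of \cite{DemboGandolfiKestenMR1825148}; but the paper itself warns that the Dembo--Gandolfi--Kesten argument for lattice animals does not transfer directly to paths, so the burden is precisely on the step you flag as ``where the real work lies.'' You have not supplied that work: property~(iii), that a self-avoiding crossing confined to non-bad sites can recover weight within $\varepsilon L$ of the unconstrained Gandolfi--Kesten crossing, is asserted without argument, and the correlation between the location of large positive weights and the bypass geometry is acknowledged but not handled. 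There is also a slip in the closing paragraph: you write that ``$m$ must stay moderate (to remain subcritical),'' but larger $m$ makes $p_m$ smaller and hence the bad set \emph{more} subcritical, so the tension you describe is not the one you state; the genuine tension is rather that a single level $m$ may not simultaneously make $p_m$ small enough for the geometric estimates and $M^{\geq -m}$ close enough to $M$, which is why you float a multi-scale scheme. In short, what you have is a plausible plan with the central estimate missing, which is consistent with the statement's status as a conjecture.
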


As we discussed, the absence of moment condition on the negative part $X_0^{-}$ causes technical issues. Why do we still believe that the same results should hold for weights $X_v$ with arbitrary negative tails? One reason is that we are looking at the greedy lattice paths with maximal weights. Besides, the vertices with very negative weights are well separated and its complement is very close to $\mathbb{Z}^d$ and percolates. Hence, it is easy for the optimal path to avoid the vertices with very negative weights. Therefore, those vertices with very negative weights have rather small influence on the weight $M_n$ of the optimal path of length $n$. Consequently, Conjecture~\ref{conj: one} may hold for $X_0$ with arbitrary negative tail. By the way, we would like to draw the attention to similar results on greedy lattice animals without the positive constraints, see \cite{DemboGandolfiKestenMR1825148}. As the almost sure convergence holds for greedy lattice animals with no constraints on the negative tails of $X_v$, by similarity between these two models, we feel that it is possible that Conjecture~\ref{conj: one} holds for the model of greedy lattice paths.

To solve the conjecture, it is natural to ask whether the argument in \cite{DemboGandolfiKestenMR1825148} for lattice animals still works for lattice paths. However, in \cite{DemboGandolfiKestenMR1825148}, the authors pointed out that their argument does not work for greedy lattice paths. So, we need new ideas to solve the problem of greedy lattice paths.

For the $L^{1}$ convergence part, certain integrability condition of $X_0^{-}$ is required. For instance, to ensure that $EM_1>-\infty$, we need at least
\begin{equation}\label{eq: cond for EM_1 > -infinity}
\int_{0}^{+\infty}P(X_0< -t)^{2d}\,\mathrm{d}t<+\infty.
\end{equation}
It turns out that $EM_n>-\infty$ under the assumption \eqref{eq: cond for EM_1 > -infinity}. Indeed, there exists $2d$ disjoint self-avoiding paths $\Gamma_1,\Gamma_2,\ldots,\Gamma_{2d}$ of length $n$ starting from the origin. Then, we have that
\[M_n\geq \max(S(\Gamma_1),S(\Gamma_2),\ldots,S(\Gamma_{2d})).\]
Therefore, for $t\geq 0$, we have that
\begin{align*}
 P(-M_n>t)&\leq P(-\max(S(\Gamma_1),S(\Gamma_2),\ldots,S(\Gamma_n))>t)\\
 &=P(S(\Gamma_j)<-t,\forall j=1,2,\ldots,2d)\\
 &=\prod_{j=1}^{2d}P(S(\Gamma_j)<-t).
\end{align*}
Note that $P(S(\Gamma_j)<-t)\leq P(\exists v\in S(\Gamma_j),X_v<-t/n)\leq nP(X_0<-t/n)$. Hence, we obtain that
\[P(-M_n>t)\leq n^{2d}P(X_0<-t/n)^{2d}.\]
Therefore, we have that
\begin{align*}
EM_n&=E M_n^{+}-E M_n^{-}\\
&=E M_n^{+}-\int_{0}^{+\infty}P(M_n^{-}>t)\,\mathrm{d}t\\
&=E M_n^{+}-\int_{0}^{+\infty}P(-M_n>t)\,\mathrm{d}t\\
&\geq E M_n^{+}-\int_{0}^{+\infty}n^{2d}P(X_0<-t/n)^{2d}\,\mathrm{d}t\\
&\geq E M_n^{+}-n^{2d+1}\int_{0}^{+\infty}P(X_0<-t)^{2d}\,\mathrm{d}t>-\infty.
\end{align*}
Finally, we propose the following conjecture.
\begin{conjecture}
Let $\{X_{v}:v\in\mathbb{Z}^d\}$ be i.i.d. random variables. Assume that there exists $\alpha>0$ such that
\[E(X_0^{+})^d(\log^{+}X_0^{+})^{d+\alpha})<+\infty\]
and that
\[\int_{0}^{+\infty}P(X_0< -t)^{2d}\,\mathrm{d}t<+\infty.\]
Then, there exists a constant $M\in(-\infty,+\infty)$ such that $E|M_n/n-M|\overset{n\to\infty}{\to}0$.
\end{conjecture}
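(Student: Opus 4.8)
We outline a possible line of attack, which we expect to work for $d\ge 2$ (for $d=1$ the only self-avoiding paths of length $n$ from the origin are the two half-lines, so $M_n/n\to EX_0$ by the strong law of large numbers, and finiteness of the limit forces $E(X_0^{-})<+\infty$, a case already contained in Theorem~\ref{thm: linear growth}); so assume $d\ge 2$.

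The upper half is the easy one: since $M_n\le M_n^{\ge -m}$ for every $m>0$, since $EM_n^{+}\le EM_n^{\ge 0}<+\infty$ by \cite[Theorem~1]{GandolfiKestenMR1258174} and $EM_n^{-}<+\infty$ by the computation following \eqref{eq: cond for EM_1 > -infinity}, and since $EM_n^{\ge -m}/n\to M^{\ge -m}$ by \eqref{eq: truncated cvg}, we get $\limsup_n E(M_n/n)\le\inf_m M^{\ge -m}=M$, and the same domination together with the $L^{1}$ part of \eqref{eq: truncated cvg} gives $E\big((M_n/n-M)^{+}\big)\to 0$, exactly as in the proof of Theorem~\ref{thm: linear growth}. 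Since $E(M_n/n)$ is finite, $E\big((M_n/n-M)^{-}\big)=E\big((M_n/n-M)^{+}\big)-\big(E(M_n/n)-M\big)$, so the $L^{1}$ convergence will follow once we establish the matching lower bound $\liminf_n E(M_n/n)\ge M$.

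The plan for the lower bound is to convert the optimal truncated path $\pi^{\ge -m}_n$ into a competitor $\sigma_n$ of length exactly $n$ by rerouting it around the sites of very negative weight that it visits, in such a way that $E\big(S(\sigma_n)\big)\ge EM_n^{\ge -m}-\varepsilon n$ with $\varepsilon\to 0$; since $M_n\ge S(\sigma_n)$ this would give $\liminf_n E(M_n/n)\ge M^{\ge -m}-\varepsilon$, whence the claim on letting $m\to\infty$ and then $\varepsilon\to 0$. Concretely: fix $K>0$ with $m\ge K$ and with $P(X_0<-K)$ below the critical probability of Bernoulli site percolation on $\mathbb{Z}^d$, so that $B_K:=\{v:X_v<-K\}$ a.s.\ decomposes into finite clusters with exponentially decaying sizes and its complement has a unique infinite component (one conditions on the origin lying in that component, the exceptional case being handled separately). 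From $\pi^{\ge -m}_n$ one deletes each excursion into a cluster $\mathcal{K}\subseteq B_K$, rejoins the two endpoints of the excursion by a path of good sites around $\mathcal{K}$, and finally trims or extends the outcome to length $n$. The accounting would rest on: (i) the number $N_n$ of $B_K$-sites on $\pi^{\ge -m}_n$, for which the Chebyshev/FKG argument of Lemma~\ref{lem: key lemma}, applied with the decreasing event $\{X_v<-K\}$ in place of $\{X_v\le -m\}$, yields $EN_n\le nP(X_0<-K)$, and exponential concentration by the argument behind \eqref{eq: exponential upper bound for N_n(m)}; (ii) the total detour length, controlled through the exponential tail of subcritical cluster sizes; and (iii) the weights collected along the detours, which are i.i.d.\ with law $P(X_0\in\cdot\mid X_0>-K)$ — whose negative part is at most $K$ and, for $d\ge 2$, whose positive part is square integrable — so that a fourth-moment bound in the spirit of \eqref{eq: fourth moment}--\eqref{eq: Borel proba upper bound} concentrates the detour weight near $E(X_0\mid X_0>-K)$ times the detour length.

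The difficulty, as we see it, is entirely quantitative. Carried out at a single threshold $K$, the bounds above control $E\big(M_n^{\ge -m}/n-S(\sigma_n)/n\big)$ only by a quantity of order $P(X_0<-K)\big(1+\int_0^{K}P(X_0<-t)\,\mathrm{d}t\big)$, and this need \emph{not} tend to $0$ as $K\to\infty$ under the conjectured hypothesis alone: already when $P(X_0<-t)\asymp t^{-1/3}$ and $d=2$ it grows like $K^{1/3}$. We expect the cure to be a multiscale rerouting — detouring simultaneously around the clusters of $\{X_v<-K_j\}$ along a geometrically growing sequence of thresholds $K_j$ and charging the loss incurred at scale $j$ against the sparsity $P(X_0<-K_j)$ at that scale — with the geometry of detours in $\mathbb{Z}^d$ (surrounding a cluster costs of the order of its surface, and the good sites remain connected ``in all $2d$ directions'') expected to reproduce the exponent $2d$ that also governs the estimate $P(-M_n>t)\le n^{2d}P(X_0<-t/n)^{2d}$ derived earlier in this section. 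Making these simultaneous detours geometrically consistent — mutually non-intersecting, disjoint from the surviving portions of $\pi^{\ge -m}_n$, and assembling into a bona fide self-avoiding path of the prescribed length — and keeping the combinatorics under control uniformly in $n$ is, to us, the principal obstacle; it is also why the deletion-based argument of \cite{DemboGandolfiKestenMR1825148} for greedy lattice animals does not carry over to paths.
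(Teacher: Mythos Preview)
This statement is a \emph{conjecture} in the paper, not a theorem: the paper offers no proof, only the motivation that $EM_n>-\infty$ under the hypothesis \eqref{eq: cond for EM_1 > -infinity}. So there is nothing to compare your attempt against; the question is simply whether what you wrote constitutes a proof.

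It does not, and you say so yourself. Your upper half is fine: the domination $M_n\le M_n^{\ge -m}$ together with \eqref{eq: truncated cvg} and the integrability of $M_n$ established after \eqref{eq: cond for EM_1 > -infinity} indeed gives $E\big((M_n/n-M)^{+}\big)\to 0$, and your reduction of the $L^{1}$ statement to $\liminf_n E(M_n/n)\ge M$ is correct. But for the lower half you explicitly identify the obstruction: a single-threshold rerouting only bounds the loss by a quantity of order $P(X_0<-K)\int_0^{K}P(X_0<-t)\,\mathrm{d}t$, which need not vanish as $K\to\infty$ under the conjectured hypothesis (your example $P(X_0<-t)\asymp t^{-1/3}$, $d=2$, is apt). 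The proposed cure---a multiscale rerouting around the clusters of $\{X_v<-K_j\}$ along geometrically growing thresholds, with the detours at different scales made geometrically consistent and self-avoiding---is a reasonable programme, but it is a programme, not a proof. The paper itself flags exactly this kind of difficulty when it notes that the argument of \cite{DemboGandolfiKestenMR1825148} for lattice animals does not transfer to paths.

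One further wrinkle: your dismissal of $d=1$ is too quick. Under the conjectured hypothesis for $d=1$, namely $\int_0^{\infty}P(X_0<-t)^{2}\,\mathrm{d}t<\infty$, one can have $E(X_0^{-})=+\infty$ (take $P(X_0<-t)\asymp t^{-3/4}$), and then $M_n/n\to -\infty$ almost surely along both half-lines; so either the conjecture is false in dimension one or a separate argument is needed there. In any case it is not ``already contained in Theorem~\ref{thm: linear growth}.''
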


\bibliographystyle{alpha}

\end{document}